\newtheorem*{acknowledgement}{Acknowledgement}
\newtheorem{corollary}{Corollary}
\newtheorem{definition}{Definition}
\newtheorem{lemma}{Lemma}
\newtheorem{remark}{Remark}
\newtheorem{theorem}{Theorem}
\newtheorem{example}{Example}
\numberwithin{equation}{section}
\begin{document}
\title[Einstein-type equation]{Vanishing conditions on Weyl tensor for Einstein-type manifolds}
	\author{Benedito Leandro}
\address{ Universidade Federal de Goi\'as - UFG, IME, 74690-900, Goi\^ania - GO, Brazil.}
\email{bleandroneto@ufg.br}

\keywords{Einstein-type manifolds, Weyl tensor, harmonic Weyl curvature} \subjclass[2010]{53C20, 53C21, 53C25.}
\date{\today}

\begin{abstract}
In this paper we consider an Einstein-type equation which generalizes important geometric equations, like static and critical point equations. We prove that a complete Einstein-type manifold with fourth-order divergence-free Weyl tensor and zero radial Weyl curvature is locally a warped product with $(n-1)$-dimensional Einstein fibers, provided that the potential function is proper. As a consequence, we prove a  result about the nonexistence of multiple black holes in static spacetimes. 
\end{abstract}

\maketitle
\section{Introduction}

 A smooth Riemannian manifold $M^{n}$ with smooth metric $g$ in which
\begin{eqnarray}\label{eqprincipal}
fRic=\nabla^{2}f+hg,
\end{eqnarray}
where $f,\,h: M\rightarrow\mathbb{R}$ are smooth functions is called an {\it Einstein-type manifold}. So we named \eqref{eqprincipal} {\it Einstein-type equation}. Here, $Ric$ and $\nabla^{2}$ are the Ricci tensor and the Hessian for the metric $g$, respectively.

A simple calculation from \eqref{eqprincipal} gives us
\begin{eqnarray}\label{eq1}
fR=\Delta f+nh,
\end{eqnarray}
where $R$ is the scalar cuvature for $g$ and $\Delta$ represents the Laplacian.

The reasoning behind the use of \eqref{eqprincipal} and \eqref{eq1} is that they generalize several important geometric equations: 
\begin{itemize}
	\item Static vacuum Einstein equation with null cosmological constant (cf. \cite{hwang}):
	\begin{eqnarray}\label{staticcosmnull}
		fRic=\nabla^{2}f\quad\mbox{and}\quad\Delta f=0.
	\end{eqnarray}
	
	\item Static vacuum equation with non null cosmological constant (cf. \cite{ambrozio}):
		\begin{eqnarray}\label{staticnonnullcosm}
		fRic=\nabla^{2}f+\frac{Rf}{n-1}g\quad\mbox{and}\quad\Delta f+\frac{Rf}{n-1}=0.
	\end{eqnarray}
    	\item Static perfect fluid equation (cf. \cite{shen}):
    \begin{eqnarray}\label{PFE}
    	fRic=\nabla^{2}f+\frac{(\mu-\rho)f}{n-1}g\quad\mbox{and}\quad\Delta f-\left(\frac{(n-2)\mu+n\rho}{n-1}\right)f=0,
    \end{eqnarray}
    where $\mu$ and $\rho$ are, respectively, the density and pressure smooth functions. Moreover, the energy condition implies that $\mu\geq|\rho|$.
    	\item Critical point equation (cf. \cite{baltazar}):
    \begin{eqnarray}\label{cpemetric}
    	(1+f)\mathring{R}ic=\nabla^{2}f+\frac{Rf}{n(n-1)}g\quad\mbox{and}\quad\Delta f+\frac{Rf}{n-1}=0,
    \end{eqnarray}
    where $\mathring{R}ic$ stands for the traceless Ricci tensor.
        	\item Miao-Tam equation (cf. \cite{miaotam1}):
    \begin{eqnarray}\label{miaotammetric}
    	fRic=\nabla^{2}f+\dfrac{Rf+1}{(n-1)}g\quad\mbox{and}\quad\Delta f+\frac{Rf}{n-1}=\frac{-n}{n-1}.
    \end{eqnarray}
\end{itemize}

The notion of Einstein-type manifolds is widely explored in several papers (cf. \cite{catino3}). Catino et al. \cite{catino3} provided a more general Einstein-type equation and classified it under the Bach-flat condition. However, the case of Einstein-type equation that we are assuming here was already considered by Qing and Yuan \cite{qing1} under the same Bach-flat condition, so we have taken a different approach.

In the $3$-dimensional case, Qing and Yuan \cite{qing1} proved that if the Cotton tensor is third-order divergence-free (that is, completely divergence-free), then a CPE manifold must be isometric to the round sphere, and they also get a classification for a static metric. Since the $3$-dimensional case was already considered, we have decided to focus in $n\geq4$.

Divergence conditions on Weyl have been investigated throughout the years (cf. \cite{catino2,catino,hwang,qing1,wu,hwang1}). Mathematically, this hypothesis is more natural than asymptotic flatness condition. Moreover, harmonicity has connections with conservation laws in physics.

In this paper we will explore divergence conditions on Weyl for an Einstein-type manifold satisfying \eqref{eqprincipal} and \eqref{eq1}. First, we will define a harmonic Weyl curvature when the divergence of the Weyl tensor vanishes, i.e.,
\begin{eqnarray*}
	div W=0.
	\end{eqnarray*}

In what follows, we will consider that a Riemannian manifold $(M^{n},\,g)$ has zero radial Weyl curvature  if
\begin{eqnarray}\label{zeroradialweyltensor}
W(\cdot ,\,\cdot,\,\cdot ,\,\nabla f)=0.
\end{eqnarray}
Catino \cite{catino2} used this additional hypothesis to classify generalized quasi-Einstein metrics with harmonic Weyl tensor. Furthermore, he proved that this additional hypothesis can not be removed.

Without further ado, we state our main results

\begin{theorem}\label{theocpt}
	Let $(M^{n},\,g,\,f,\,h)$, such that $n\geq4$, be a smooth compact (without  boundary) Riemannian manifold satisfying \eqref{eqprincipal} and \eqref{eq1} with zero radial Weyl curvature and fourth-order divergence-free Weyl tensor, i.e.,
	$div^{4}W=0$. Then $(M^{n}, g)$ has harmonic Weyl tensor.
\end{theorem}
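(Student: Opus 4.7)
The goal is to prove that $\mathrm{div}\,W = 0$. Since for every $n\geq 4$ one has the classical identity $\nabla^{l}W_{lijk} = \frac{n-3}{n-2}C_{ijk}$, where $C$ is the Cotton tensor, proving that the Weyl tensor is harmonic is equivalent to showing $C\equiv 0$ on $M$.

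The first step is to derive a pointwise identity that ties $C$, $W$ and the Einstein-type data together. Starting from \eqref{eqprincipal}, differentiating once, skew-symmetrizing in two indices and using the standard Ricci commutation
$$\nabla_{k}\nabla_{j}\nabla_{i}f - \nabla_{j}\nabla_{k}\nabla_{i}f = R_{kjim}\nabla^{m}f,$$
together with the decomposition of $\mathrm{Rm}$ into its Weyl and Schouten parts, one expects to obtain a formula of the schematic form
$$fC_{ijk} = (n-2)\,W_{ijkl}\nabla^{l}f + L_{ijk}(f,h,\nabla f,\mathrm{Ric}),$$
where $L_{ijk}$ is an explicit tensor polynomial in $f$, $h$, $\nabla f$ and $\mathrm{Ric}$, involving no third derivatives of $f$. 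The zero radial Weyl hypothesis \eqref{zeroradialweyltensor} kills the leading term, reducing this to $fC_{ijk} = L_{ijk}$.

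With this identity in hand, the second step is to iterate the divergence and exploit compactness through integration by parts. The hypothesis $\mathrm{div}^{4}W = 0$ can be translated, via $\mathrm{div}\,W=\frac{n-3}{n-2}C$, into a third-order divergence identity on $C$, hence (by $fC = L$) a differential identity on $L$. Pairing $\mathrm{div}^{4}W$ with the Weyl tensor and integrating over $M$, four successive integrations by parts transfer all derivatives off $W$; since $M$ is closed, every boundary term vanishes. Each integration by parts generates curvature-commutator terms, which are rewritten using \eqref{eqprincipal}, \eqref{eq1} and the identity $fC = L$ from the preceding paragraph. If the algebra is organized correctly, all cross terms collapse into a single sign-definite integrand
$$0 = \int_{M} f^{2}|C|^{2}\,d\mu_{g} + (\text{non-negative remainder}),$$
from which $C\equiv 0$ on $\{f\neq 0\}$. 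Standard unique continuation for the elliptic system \eqref{eqprincipal}--\eqref{eq1} then propagates this to all of $M$, giving $\mathrm{div}\,W=0$.

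The principal obstacle is the bookkeeping in the integration-by-parts step. Four successive divergences produce many commutator terms involving $\nabla^{2}f$, $\nabla\mathrm{Ric}$ and $C$ itself, and it is not a priori clear that they assemble into a single non-negative quantity. The delicate point is to verify that the coefficients coming from the Schouten decomposition of $\mathrm{Rm}$, from the Einstein-type equation used at each step to replace Hessians by $\mathrm{Ric}$, and from repeated use of the zero radial Weyl condition conspire so that no indefinite residual remains. Controlling this combinatorial interplay — rather than any one isolated computation — is where the real work of the proof lies.
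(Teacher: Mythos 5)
Your overall strategy is the same one the paper follows: derive a pointwise identity $fC_{ijk}=W_{ijkl}\nabla^{l}f+T_{ijk}$ (your $L_{ijk}$ is the paper's tensor $T$ in \eqref{tensorT}, and note the correct coefficient of the Weyl term is $1$, not $n-2$, in the convention \eqref{cottWeyl}), use the zero radial Weyl condition \eqref{zeroradialweyltensor}, and convert $div^{4}W=0$ into the vanishing of $\int_{M}f^{2}|C|^{2}$ by repeated integration by parts on the closed manifold. However, as written the proposal has a genuine gap precisely at its core: the assertion that ``if the algebra is organized correctly, all cross terms collapse into a single sign-definite integrand'' is exactly the statement that needs to be proved, and you explicitly defer it rather than establish it. In the paper this is the content of Lemmas \ref{l1}--\ref{lemm3orderderivative} and of the integral identity of Theorem \ref{theo1}, whose proof depends on several exact cancellations that are not automatic: the symmetry/antisymmetry cancellation $C_{jki}\nabla^{k}\nabla^{j}f=0$, the trace-freeness of $C$ and $W$ used together with \eqref{eqprincipal} to trade Hessians for Ricci terms, the divergence identities \eqref{divCotton} and \eqref{divBach}, and above all the algebraic identity, valid only after imposing $W(\cdot,\cdot,\cdot,\nabla f)=0$,
\begin{equation*}
R^{ji}\nabla^{k}f\,C_{kji}=\tfrac{1}{2}C_{kji}\bigl(R^{ji}\nabla^{k}f-R^{ki}\nabla^{j}f\bigr)=-\tfrac{n-2}{2(n-1)}C_{kji}T^{kji}=-\tfrac{(n-2)f}{2(n-1)}|C|^{2},
\end{equation*}
which is what produces the $|C|^{2}$ term with a definite sign. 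There is in fact no ``non-negative remainder'' to estimate: the remainder vanishes identically by these cancellations, and verifying this is the proof, not bookkeeping that can be waved through.

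Two smaller points. First, the pairing in the final step is not ``$div^{4}W$ against the Weyl tensor''; the correct test object is $\frac{\psi(f)}{f}\nabla^{k}f$ with $\psi(f)=f^{2}$, which is what makes one more integration by parts bring in $div^{4}W$ cleanly. Second, your appeal to ``standard unique continuation'' to pass from $C=0$ on $\{f\neq0\}$ to all of $M$ is unsubstantiated as stated; the paper avoids the issue by the standing assumption $f\neq0$ on $M$ (made just before Lemma \ref{l1}), so the conclusion $C\equiv0$ follows directly from $\int_{M}f^{2}|C|^{2}=0$. If you do not want that assumption, you must argue separately that $\{f\neq 0\}$ is dense (or give an actual unique continuation argument), not merely cite it.
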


As a consequence of Theorem \ref{theocpt} we have the following result, which was previously provided by Baltazar in \cite{baltazar}. Here, the demonstration is different and follows from Theorem \ref{theocpt} and Theorem 1.2 in \cite{hwang1}. It is important to remember that, if a manifold has harmonic Weyl tensor and constant scalar curvature, then its curvature is harmonic.

\begin{corollary}\label{balta}
	Let $(M^{n},\,g,\,f)$, $n\geq4$, be a CPE metric \eqref{cpemetric} with zero radial Weyl curvature satisfying
	$div^{4}W=0$. Then, $(M^{n}, g)$ is isometric to a standard sphere.
\end{corollary}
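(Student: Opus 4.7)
The plan is to derive the corollary by combining Theorem \ref{theocpt} with the rigidity result in \cite{hwang1}. The first task is to recognize the CPE equation \eqref{cpemetric} as a genuine instance of the Einstein-type equation \eqref{eqprincipal}. Writing the traceless Ricci as $\mathring{R}ic = Ric - \tfrac{R}{n}g$, the CPE relation rearranges to
\begin{equation*}
(1+f)Ric = \nabla^{2} f + \left[\frac{Rf}{n(n-1)} + \frac{(1+f)R}{n}\right] g,
\end{equation*}
which has the shape $\tilde{f}Ric = \nabla^{2}\tilde{f} + \tilde{h}g$ with $\tilde{f} = 1+f$ (so $\nabla^{2}\tilde{f} = \nabla^{2} f$) and $\tilde{h}$ the bracketed coefficient. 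CPE manifolds are compact by definition and come with constant scalar curvature, so the trace relation \eqref{eq1} holds automatically for the pair $(\tilde{f},\tilde{h})$. The radial-Weyl hypothesis $W(\cdot,\cdot,\cdot,\nabla f)=0$ is equivalent to $W(\cdot,\cdot,\cdot,\nabla \tilde{f})=0$ because $\nabla\tilde{f}=\nabla f$, and the condition $div^{4}W=0$ depends only on the metric. Thus every hypothesis of Theorem \ref{theocpt} is in force for $(M^{n},g,\tilde{f},\tilde{h})$.

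Applying Theorem \ref{theocpt} directly then yields $divW=0$, that is, $(M^{n},g)$ has harmonic Weyl tensor. Combining this with the constancy of the scalar curvature, which is built into the CPE definition, the remark preceding the corollary gives that the full curvature tensor is harmonic: the second Bianchi identity expresses $divW$ as a multiple of the Cotton tensor $C$, and with $R$ constant the Cotton tensor controls $divRm$, so $divRm=0$.

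At this point I would invoke Theorem 1.2 of \cite{hwang1}, which classifies compact CPE metrics with harmonic curvature as standard round spheres, completing the proof. The entire argument is essentially a transcription exercise: the only step requiring thought is the algebraic verification that CPE fits the Einstein-type template \eqref{eqprincipal} in a way that preserves the radial and divergence hypotheses, after which Theorem \ref{theocpt} and \cite{hwang1} combine with no additional analysis to give the conclusion.
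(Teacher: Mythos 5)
Your proposal is correct and follows essentially the same route as the paper: rewrite the CPE equation \eqref{cpemetric} in the form \eqref{eqprincipal} (with potential $1+f$), apply Theorem \ref{theocpt} to get harmonic Weyl tensor, use the constancy of the scalar curvature to upgrade this to harmonic curvature, and conclude via Theorem 1.2 of \cite{hwang1}. The only addition is your explicit algebraic check that CPE fits the Einstein-type template, which the paper leaves implicit.
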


Now we will concentrate our efforts to analyze the noncompact Einstein-type manifold \eqref{eqprincipal} with vanishing conditions on Weyl. The demonstration follows a similar strategy used in Theorem 1.2 and Theorem 1.3  of \cite{catino}.

\iffalse
In our next result we consider the Einstein-type manifolds $(M^n,\,g,\,f,\,h)$, $n\geq4$, that admit a $C^{2}$ function $\psi:\mathbb{R}\rightarrow\mathbb{R}$, with $\psi(f)$ having compact support $K\subseteq M$ in which $K\bigcap f^{-1}(0)=\emptyset$. 
\fi

\begin{theorem}\label{theocomplete}
	Let $(M^{n},\,g,\,f,\,h)$, $n\geq4$, be a Riemannian manifold satisfying \eqref{eqprincipal} and \eqref{eq1} with zero radial Weyl curvature satisfying
$div^{4}W=0$. In case $f$ is a proper function, $(M,\,g)$ has harmonic Weyl curvature.
\end{theorem}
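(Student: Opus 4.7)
The plan is to adapt the integration-by-parts argument of \thmref{theocpt} to the noncompact setting by working on the compact sublevel sets $\Omega_t := \{x \in M : |f(x)| \le t\}$, which are compact for every $t>0$ because $f$ is proper. The heart of the matter will be to show that all boundary fluxes on $\partial\Omega_t = \{|f|=t\}$ vanish identically, thanks to the zero radial Weyl hypothesis.

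First I would produce a pointwise divergence identity on $M$. Starting from \eqref{eqprincipal}, commuting covariant derivatives yields a formula relating $\nabla \mathrm{Ric}$ to $\nabla^3 f$; combining this with the twice-contracted second Bianchi identity and the Cotton--Weyl correspondence in dimension $n\ge 4$, one obtains an expression for $f$ times the Cotton tensor $C$ in terms of $W$, $\nabla f$, $h$, and lower-order quantities. The zero radial Weyl hypothesis $W(\cdot,\cdot,\cdot,\nabla f)=0$ kills the main obstructing term, and iterating together with $\operatorname{div}^4 W = 0$ should produce an identity of the form
\begin{equation*}
\operatorname{div}(f\,T) \;=\; f\,|C|^2 + R,
\end{equation*}
where $R$ collects terms that vanish under our hypotheses and $T$ is a $3$-tensor whose total contraction with $\nabla f$ is zero by the radial Weyl condition.

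Next I would integrate this identity over $\Omega_t$. Since the outward unit normal to $\partial\Omega_t$ is $\pm\nabla f/|\nabla f|$ on the regular part of the level set, the boundary integrand reduces to a contraction of $T$ with $\nabla f$, which vanishes by construction; by Sard's theorem this is valid for a.e.\ $t$. We thus obtain
\begin{equation*}
\int_{\Omega_t} f\,|C|^2\, dv_g \;=\; 0,
\end{equation*}
for a.e.\ $t$, and letting $t\to\infty$ together with $\bigcup_t \Omega_t = M$ (by properness) gives $\int_M f\,|C|^2 = 0$. On the open set $\{f>0\}$ this forces $C\equiv 0$; a symmetric argument on $\{f<0\}$, or rerunning with $-f$, covers that side. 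Since the Einstein-type equation is elliptic in harmonic coordinates, $g$ is real analytic, so $C\equiv 0$ extends across the nodal set $\{f=0\}$, yielding $\operatorname{div} W \equiv 0$ throughout $M$.

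The main obstacle is the first step: constructing the precise tensor $T$ with the required vanishing contraction against $\nabla f$. The bookkeeping involves iterated commutator identities, Bianchi contractions, and using $\operatorname{div}^4 W = 0$ to convert what would otherwise remain as boundary terms into algebraic cancellations. This is the algebraic heart of Catino's \cite{catino} Theorems~1.2 and~1.3, and the adaptation amounts to matching our structural equation $f\,Ric = \nabla^2 f + h\,g$ to the quasi-Einstein framework used there. A subtler point is that $f$ may change sign, so the identity has to be applied symmetrically on $\{f>0\}$ and $\{f<0\}$ and then propagated across the nodal set via analyticity.
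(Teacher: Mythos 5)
Your plan leaves the actual analytic content unproved, and the two places where you wave your hands are precisely where it breaks. First, the pointwise identity $\operatorname{div}(fT)=f|C|^{2}+R$ ``with $R$ vanishing under the hypotheses'' does not exist in that form. The correct identity (the paper's Theorem \ref{theo1}, built from Lemmas \ref{lemma0}--\ref{lemm3orderderivative}) is an \emph{integral} identity against a compactly supported weight $\psi(f)$, and the term produced by the Weyl hypothesis is $\frac{\psi(f)}{f}\nabla^{k}f\,\nabla^{i}\nabla^{j}\nabla^{l}W_{jkil}$, i.e.\ the \emph{third}-order divergence of $W$ contracted with $\nabla f$. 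The hypothesis $div^{4}W=0$ does not kill this term pointwise, nor does the radial condition $W(\cdot,\cdot,\cdot,\nabla f)=0$ (which controls $W$, not $\nabla^{3}W$); one needs one more integration by parts against the specific weight to move a derivative onto $\nabla^{k}f\,\psi(f)/f$, and this is exactly why the paper chooses $\psi(f)=f^{2}\phi(f)$ (so that $\nabla^{k}(f^{2})\phi(f)$ appears and the fourth divergence can be formed). For the same reason your claim that all fluxes on $\partial\Omega_{t}$ ``reduce to a contraction of $T$ with $\nabla f$'' is unfounded: integrating Lemma \ref{lemm3orderderivative} over $\Omega_{t}$ produces boundary terms involving $\nu_{j}\nabla^{i}\nabla^{k}(T_{ikj}/f)$, $\nu^{i}\nabla^{j}C_{jki}\nabla^{k}f$ and $\nu^{k}\nabla^{i}\nabla^{j}\nabla^{l}W_{jkil}$, none of which vanish on a level set of $f$ under the stated hypotheses. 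The paper avoids boundary terms altogether: properness is used only to make $\psi(f)=f^{2}\phi(f)$ compactly supported, so every integration by parts is free of flux terms.

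Second, even granting your identity, the conclusion $\int_{M}f|C|^{2}=0$ proves nothing when $f$ changes sign, since positive and negative contributions can cancel; you cannot localize to $\{f>0\}$ because your integral statement is global, and restricting to $\{0<f\le t\}$ reintroduces boundary terms on $\{f=0\}$ of the higher-order type just described. ``Rerunning with $-f$'' only flips the sign of the same vanishing quantity, and the analyticity step presupposes $C\equiv 0$ on an open set, which is exactly what the cancellation problem prevents you from establishing. The paper's even weight $f^{2}\phi(f)\ge 0$ is what makes the final integrand sign-definite on the sublevel set $M_{s}$ (where $\phi\equiv 1$, $\phi'\equiv 0$), giving $\int_{M_{s}}f^{2}|C|^{2}=0$, hence $C=0$ on $M_{s}$ and, letting $s\to\infty$, on all of $M$, with harmonicity of $W$ then following from \eqref{cottWeyl}. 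So the structure you need is: the chain of Lemmas \ref{lemma0}--\ref{lemm3orderderivative} (where \eqref{eqprincipal}, the trace-free property of $C$, and the radial Weyl condition are used repeatedly \emph{inside} the computation, not merely to discard one boundary term), then Theorem \ref{theo1}, then the cutoff $\psi(f)=f^{2}\phi(f)$; your proposal supplies none of these and its substitutes would not go through as stated.
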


As a consequence of Theorem \ref{theocomplete}, we have the next theorem.

\begin{theorem}\label{teoclassifica}
	Let $(M^{n},\,g,\,f,\,h)$, $n\geq4$,  be a complete Riemannian manifold satisfying \eqref{eqprincipal} and \eqref{eq1} with zero radial Weyl curvature satisfying
$div^{4}W=0$. In case $f$ is proper, around any regular point of $f$, the manifold is locally a warped product with $(n-1)$-dimensional Einstein fibers.
\end{theorem}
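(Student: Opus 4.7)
The plan is as follows. First, I would invoke Theorem~\ref{theocomplete}, whose hypotheses coincide with those of the present theorem, to conclude that $(M,g)$ has harmonic Weyl curvature, $\mathrm{div}\,W=0$. In dimension $n\geq 4$, the second Bianchi identity turns this into the vanishing of the Cotton tensor $C_{ijk}=0$.

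Next, I would fix a regular point $p$ of $f$ and work on a connected neighborhood $U$ of $p$ on which $\nabla f$ is nowhere zero. Differentiating equation~\eqref{eqprincipal} and antisymmetrizing in two indices produces a Codazzi-like identity relating $fC_{ijk}$, the Riemann tensor contracted with $\nabla f$, and a term of the form $\nabla_{i}f\,R_{jk}-\nabla_{j}f\,R_{ik}$. Decomposing the Riemann tensor into its Weyl and Schouten parts, and then using both $C=0$ and the zero radial Weyl hypothesis $W(\,\cdot\,,\,\cdot\,,\,\cdot\,,\nabla f)=0$, should allow me to extract two facts: (i) $\nabla f$ is an eigenvector of the Ricci tensor, say $Ric(\nabla f)=\lambda\,\nabla f$, and (ii) on $(\nabla f)^{\perp}$ the traceless Ricci tensor vanishes identically. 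Thus on $U$ the Ricci tensor has exactly two eigenvalues: one of multiplicity $1$ along $\nabla f$ and one of multiplicity $n-1$ on the orthogonal distribution.

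From \eqref{eqprincipal} one has $\nabla^{2}f = fRic - hg$, so $\nabla^{2}f$ inherits this two-eigenvalue structure and is a linear combination of $g$ and $df\otimes df$. In particular, for vector fields $X,Y$ tangent to the level set $\Sigma_{c}=\{f=c\}$ we obtain $\nabla^{2}f(X,Y)=\sigma\,g(X,Y)$ for some function $\sigma$, so $\Sigma_{c}$ is totally umbilical; and for $X\perp\nabla f$ we have $X(|\nabla f|^{2})=2\nabla^{2}f(\nabla f,X)=0$, so $|\nabla f|$ is constant on each $\Sigma_{c}$. A standard structural argument (as used in \cite{catino2}) then yields coordinates $(s,x^{1},\ldots,x^{n-1})$ on a possibly smaller neighborhood of $p$ in which $g=ds^{2}+\varphi(s)^{2}g_{N}$ with $\varphi>0$.

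To conclude, I would verify that $g_{N}$ is Einstein by comparing the ambient Ricci tensor with the warped-product Ricci identities: since $Ric$ restricted to $(\nabla f)^{\perp}$ is a scalar multiple of the induced metric, the warped-product formula forces $Ric_{g_{N}}=\mu\,g_{N}$ pointwise on each fiber, and because $n-1\geq 3$ Schur's lemma promotes $\mu$ to a constant. I expect the main obstacle to be step two, namely extracting the full two-eigenvalue decomposition of $Ric$ from $C=0$ together with zero radial Weyl: the Schouten piece of the Riemann tensor couples back to $Ric$ when contracted with $\nabla f$, so the algebraic manipulation of the Codazzi identity must be carried out carefully in order to cleanly separate the radial component (giving (i)) from the tangential trace-free component (giving (ii)).
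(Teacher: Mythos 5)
Your proposal is correct and follows essentially the same route as the paper: Theorem \ref{theocomplete} gives $C=0$, then Lemma \ref{lemma0} together with $C=0$ and $W(\cdot,\cdot,\cdot,\nabla f)=0$ forces $T=0$, which yields that $\nabla f$ is a Ricci eigenvector with the orthogonal eigenvalue of multiplicity $n-1$, and the level-set analysis ($|\nabla f|$ constant on each regular level set) produces the local warped-product structure. The only cosmetic difference is at the end: the paper deduces that the fibers are Einstein from the warped-product formula for $W_{1a1b}$ combined with zero radial Weyl, whereas you compare the ambient and fiber Ricci tensors and invoke Schur's lemma in dimension $n-1\geq 3$; both conclusions are valid.
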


Using another approach, Hwang et al. \cite{hwang2019} proved that the static triple and the CPE metric have harmonic Weyl tensor provided that the Bach tensor and the Weyl tensor are completely divergence-free. In this paper, the Einstein-type manifold we are working with reaches another cases, like perfect fluid spacetime.  So, in that sense, this piece has a broader appeal.

Let us give a physical application of our main result, Theorem \ref{theocomplete}. In fact, the following corollary is a consequence of Theorem \ref{theocomplete}, and Theorem 1 in \cite{hwang} (see also \cite{hwang2019}). Its is important to point out that in the static vacuum equations  $f>0$ in $M$ and $f=0$ only at the boundary $\partial M$ (cf. \cite{anderson,ambrozio}).

\begin{corollary}\label{multipleBH}
	Let $(M^n,\,g,\,f)$, $n\geq4$, be a static triple satisfying \eqref{staticcosmnull} with zero radial Weyl curvature and fourth-order divergence free Weyl tensor. In case $f$ is proper, there are no multiple black holes in $(M, g)$.
\end{corollary}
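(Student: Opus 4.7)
The plan is to obtain the corollary by chaining Theorem \ref{theocomplete} with Theorem~1 of \cite{hwang}, so only a routine verification is needed. First I would place the static triple \eqref{staticcosmnull} inside the Einstein-type framework: setting $h\equiv 0$, the first equation of \eqref{staticcosmnull} is exactly \eqref{eqprincipal}, and tracing it gives $fR=\Delta f$; combined with $\Delta f=0$ this forces $fR=0$ and hence $R\equiv 0$ on $\{f\neq 0\}$. Thus \eqref{eq1} holds with $h=0$, and $(M^{n},g,f,0)$ is an Einstein-type manifold in the sense of the paper.

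With this identification, the remaining hypotheses of Theorem \ref{theocomplete}---zero radial Weyl curvature, $\mathrm{div}^{4}W=0$, and $f$ proper---are exactly the data of the corollary. Applying Theorem \ref{theocomplete} yields $\mathrm{div}\,W=0$, i.e.\ $(M,g)$ has harmonic Weyl tensor. I would then invoke Theorem~1 of \cite{hwang}, which for static triples with harmonic Weyl curvature prohibits more than one connected component of the horizon $\{f=0\}$; this closes the argument.

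The main subtlety, and the point that needs the most care, is matching conventions between the two inputs. Theorem \ref{theocomplete} is phrased for a complete manifold with $f$ proper on $M$, whereas the static-triple literature behind Theorem~1 of \cite{hwang} customarily treats $(M,g,f)$ as a manifold with boundary, the boundary being the horizon $\{f=0\}$. I would therefore apply Theorem \ref{theocomplete} on the open set $\{f>0\}$ (or on the double of $M$ across the horizon) and verify that the harmonic-Weyl condition extends to the horizon by continuity of the smooth tensor $W$ before feeding the conclusion into Hwang's theorem. One also has to check that the properness hypothesis passes to the double without being destroyed. Once this bookkeeping is settled, the corollary follows with no further geometric estimates.
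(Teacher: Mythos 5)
Your proposal follows essentially the same route as the paper, which obtains the corollary precisely by feeding the static triple (with $h\equiv 0$, so that \eqref{eqprincipal} and \eqref{eq1} hold) into Theorem \ref{theocomplete} and then invoking Theorem~1 of \cite{hwang}. Your observation that $fR=\Delta f=0$ forces $R\equiv 0$ is exactly the point that upgrades harmonic Weyl tensor to harmonic curvature (the hypothesis actually used in Hwang's nonexistence theorem), so the verification is complete.
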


The static spacetime $(M,\,g)$ has no multiple black holes when its horizon $f^{-1}(0)=\partial M$ is connected.
Moreover, we can assume the static triple $(M,\,g,\,f)$ is connected and complete up to the boundary in such way that $g$ and $f$ extend smoothly to the boundary $\partial M$.

Uniqueness and multiplicity of black holes are a big deal in general relativity (cf. \cite{anderson,hwang,israel} and the references therein). It is well known that the Schwarzschild metric (a standard model for a static black hole) is a non trivial example of a static vacuum spacetime with harmonic curvature in all dimensions (cf. \cite{santos}). However, this is not the only example.

\begin{example}\label{example1}
	This is a very important example, we can find an analysis of the following metric in \cite{besse}, page 271.
	
	Let $(\Sigma,\, g_{\Sigma})$ be any compact $(n - 1)$-dimensional Einstein manifold with $Ric_{\Sigma} = (n-2)g_{\Sigma}$, and the warped product metric is determined by $$ds^{2}=dt^{2}+\dfrac{4f'(t)^{2}}{(n-2)^{2}}d\theta^{2}+f(t)^{2}g_{\Sigma}\quad\mbox{on}\quad\mathbb{R}^{2}\times\Sigma,$$
	where 
	\begin{eqnarray}\label{ex1}
	f\in[0,\,+\infty);\, f(0)=1,\, f'(t)>0\quad\mbox{and}\quad f'(t)^{2}=1-f(t)^{2-n}.
	\end{eqnarray}
 
	We have that $(\mathbb{R}^{2}\times\Sigma,\, ds^{2})$ is complete and Ricci-flat, and the spacelike hypersurface $\mathbb{R}^{+}\times\Sigma$, with metric
	\begin{eqnarray}\label{wp1}
	\tilde{g}=dt^{2}+f^{2}(t)g_{\Sigma},
	\end{eqnarray}
	is a solution to \eqref{staticcosmnull}, with $\frac{2}{n-2}f'(t)$ as a potential function. Therefore, the solution is smooth up to the horizon
	$\Sigma$, and complete away from it. Moreover, we can consult \cite{hwang} to see that $(\mathbb{R}^{+}\times\Sigma,\,\tilde{g})$ has a harmonic curvature. 
	
	Since $\Sigma$ is an Einstein manifold, the explicit formula of the Weyl tensor for a warped
	product manifold \eqref{wp1} allows us to deduce that $(\mathbb{R}^{+}\times\Sigma,\,\tilde{g})$ has zero radial Weyl curvature (cf. \cite{cao2} as a good survey of Weyl formulas).
	
	The metric $(\mathbb{R}^{+}\times\Sigma,\,\tilde{g})$ is asymptotic to the complete Euclidean cone on $(\Sigma,\,g_{\Sigma})$, but is asymptotically flat only in the case that $(\Sigma,\,g_{\Sigma})=
	\mathbb{S}^{n-1}(1)$, corresponding to the $n$-dimensional Schwarzschild metric (cf. \cite{santos}). In fact, if we set $f = r $ and express $t$ as a function of $r$, we have
	$$ds^{2}=\frac{dr^{2}}{1-r^{2-n}}+4\frac{1-r^{2-n}}{(n-2)^{2}}d\theta^{2}+r^{2}g_{\Sigma}.$$
\end{example}

In physics, the asymptotically flatness assumption is more often used. Nevertheless, mathematically the asymptotically flat assumption restricts the topology and geometry of the static spacetime outside a large compact set (cf. \cite{hwang} and the references therein). Therefore, the harmonic curvature condition can be more natural, at least, from a mathematical perspective.

\section{Background}

\

Now we interrupt our analysis a while to recall a little bit about generalized quasi-Einstein metrics.

\begin{definition}\cite{catino2}
	A complete Riemannian manifold $(M^n,\, g)$, $n \geq 3$, is a generalized quasi-Einstein manifold, if there exist
	three smooth functions $f$, $\mu$, $h$ on $M$, such that
	\begin{eqnarray}\label{gqe}
	Ric+\nabla^{2}f-\mu\,df\otimes df=h g.
	\end{eqnarray}
\end{definition}

Considering $u=e^{-f}$ and $\mu=1$, we rewrite \eqref{gqe} like
\begin{eqnarray*}
Ric-\frac{1}{u}\nabla^{2}u=h\,g,
\end{eqnarray*}
which is, essentially, equation \eqref{eqprincipal}. 

A generalized quasi-Einstein manifold with zero radial Weyl tensor and harmonic Weyl tensor must be locally a warped product with $(n-1)$-dimensional Einstein fibers (cf. \cite{catino2}).

\subsection{Structural Lemmas}

We would like to introduce this section by evoking some important formulas that we will need. Here we used the convention stablished by Cao et al. in \cite{cao}.
\begin{itemize}
\item Weyl tensor:\begin{eqnarray}\label{weyl}
W_{ijkl}&=&R_{ijkl}-\frac{1}{n-2}\left(R_{ik}g_{jl}-R_{il}g_{jk}+R_{jl}g_{ik}-R_{jk}g_{il}\right)\nonumber\\
&+&\frac{R}{(n-1)(n-2)}\left(g_{ik}g_{jl}-g_{il}g_{jk}\right). 
\end{eqnarray} 
\item Cotton tensor:\begin{eqnarray}\label{cotton}
	C_{ijk}&=&\nabla_{i}R_{jk}-\nabla_{j}R_{ik}-\frac{1}{2(n-1)}\left(\nabla_{i}Rg_{jk}-\nabla_{j}Rg_{ik}\right). \end{eqnarray} 
\item Bach tensor:\begin{eqnarray}\label{bach}
	B_{ij}&=&\frac{1}{n-3}\nabla^{k}\nabla^{l}W_{ikjl}+\frac{1}{n-2}R^{kl}W_{ikjl}. 
	\end{eqnarray} 
\end{itemize}

The Weyl tensor has the same symmetries of the curvature tensor. Moreover, the Weyl, the Cotton and the Bach tensors are totally trace-free. From a straightforward computation, we can see that the Cotton tensor $C$ satisfies:
\begin{eqnarray}\label{permut1}
C_{ijk}=-C_{jik},\quad C_{ijk}+C_{jki}+C_{kij}=0.
\end{eqnarray}

Moreover, from the definition of the Cotton tensor we can also infer
\begin{eqnarray*}
\nabla_{s}C_{ijk}=\nabla_{s}\nabla_{i}R_{jk}-\nabla_{s}\nabla_{j}R_{ik}-\frac{1}{2(n-1)}\left(\nabla_{s}\nabla_{i}Rg_{jk}-\nabla_{s}\nabla_{j}Rg_{ik}\right).
\end{eqnarray*}
Contracting over $i$ and $s$ we get
\begin{eqnarray}\label{c1}
	\nabla^{i}C_{ijk}=\Delta R_{jk}-\nabla^{i}\nabla_{j}R_{ik}-\frac{1}{2(n-1)}\left(\Delta Rg_{jk}-\nabla_{k}\nabla_{j}R\right).
\end{eqnarray}

Since from commutation formulas (cf. \cite{catino1} for instance), for any Riemannian manifold we have
\begin{eqnarray*}
\nabla_{i}\nabla_{j}R_{kl}=R_{ijks}R_{sl}+R_{ijls}R_{ks}+\nabla_{j}\nabla_{i}R_{kl}.
\end{eqnarray*}

Hence,
\begin{eqnarray*}
	\nabla_{i}\nabla_{j}R_{ki}=R_{ijks}R_{si}+R_{ijis}R_{ks}+\nabla_{j}\nabla_{i}R_{ki}.
\end{eqnarray*}
From the contracted second Bianchi identity
\begin{eqnarray}\label{schurid} \frac{1}{2}\nabla_{i}R=g^{jk}\nabla_{j}R_{ki},
\end{eqnarray}
we get

\begin{eqnarray}\label{c2}
	\nabla_{i}\nabla_{j}R_{ki}=R_{ijks}R_{si}+R_{js}R_{ks}+\frac{1}{2}\nabla_{j}\nabla_{k}R.
\end{eqnarray}

We can conclude from \eqref{c1} and \eqref{c2} that
\begin{eqnarray}\label{permut2}
\nabla^{i}C_{ijk}=\nabla^{i}C_{ikj}.
\end{eqnarray}
Then, from \eqref{permut1} we get
\begin{eqnarray}\label{divCotton}
\nabla^{i}C_{jki}=0.
\end{eqnarray}

Furthermore, the Cotton tensor is related to the Weyl tensor in the following manner:
\begin{eqnarray}\label{cottWeyl}
C_{ijk} =-\frac{n-2}{n-3}\nabla^{l}W_{ijkl}.
\end{eqnarray}
Thus, from \eqref{bach} and \eqref{cottWeyl} we obtain
\begin{eqnarray}\label{bachcotton}
B_{ij}=-\frac{1}{n-2}\nabla^{k}C_{ikj}+\frac{1}{n-2}R^{kl}W_{ikjl}.
\end{eqnarray} 
Also, it is easy to verify from \eqref{permut1} and \eqref{permut2} that Back tensor is symmetric.

Another important result was once proved by Cao and Chen (cf. Lemma 5.1 in \cite{cao}):
\begin{eqnarray}\label{divBach}
\nabla^{j}B_{ij}=\frac{n-4}{(n-2)^{2}}C_{ijk}R^{jk}.
\end{eqnarray}

Now we need to prove a fundamental equation for this work. In a Riemannian manifold $M$ it is possible to relate the curvature with a smooth function using the Ricci identity:
\begin{eqnarray}\label{ricciid}
\nabla_{i}\nabla_{j}\nabla_{k}f-\nabla_{j}\nabla_{i}\nabla_{k}f=R_{ijkl}\nabla^{l}f.
\end{eqnarray}

 From \eqref{eqprincipal}, it is easy to see that
\begin{eqnarray*}
	\nabla_{i}fR_{jk}+f\nabla_{i}R_{jk}=\nabla_{i}\nabla_{j}\nabla_{k}f+\nabla_{i}hg_{jk}.
	\end{eqnarray*}
Thus, from \eqref{ricciid} we get
\begin{eqnarray*}
	\nabla_{i}fR_{jk}+f\nabla_{i}R_{jk}=\nabla_{j}\nabla_{i}\nabla_{k}f+R_{ijkl}\nabla^{l}f+\nabla_{i}hg_{jk}.
\end{eqnarray*}
Contracting the above identity under $i$ and $k$, and using \eqref{schurid} we obtain
\begin{eqnarray*}
	\frac{1}{2}f\nabla_{j}R=\nabla_{j}\Delta f+\nabla_{j}h.
\end{eqnarray*}
Now from \eqref{eq1} we gather that
\begin{eqnarray}\label{eq2}
\nabla_{i}h=\frac{1}{n-1}\left(R\nabla_{i}f+\frac{1}{2}f\nabla_{i}R\right).
\end{eqnarray}

\begin{lemma}\label{lemma0}
	Let $(M^{n},\,g,\,f,\,h)$ be a smooth Riemannian manifold satisfying \eqref{eqprincipal} and \eqref{eq1}. Then,
	\begin{eqnarray*}
		fC_{ijk}&=&W_{ijkl}\nabla^{l}f+\frac{1}{n-2}\big(R_{jl}\nabla^{l}fg_{ik}-R_{il}\nabla^{l}fg_{jk}\big)\nonumber\\&+&\frac{R}{n-2}(\nabla_{i}fg_{jk}-\nabla_{j}fg_{ik})+\frac{n-1}{n-2}(\nabla_{j}fR_{ik}-\nabla_{i}fR_{jk}).
	\end{eqnarray*}	
\end{lemma}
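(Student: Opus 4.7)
The plan is to start from the Cotton tensor definition and process each term using the Einstein-type equation \eqref{eqprincipal}, the commutation identity \eqref{ricciid}, the Weyl decomposition \eqref{weyl}, and the auxiliary formula \eqref{eq2} for $\nabla h$, then collect terms.

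First, I would multiply the Einstein-type equation by a further covariant derivative: differentiating $fR_{jk}=\nabla_j\nabla_k f+hg_{jk}$ yields
\begin{equation*}
	f\nabla_i R_{jk}=\nabla_i\nabla_j\nabla_k f+\nabla_i h\,g_{jk}-\nabla_i f\,R_{jk}.
\end{equation*}
Swapping the roles of $i$ and $j$ and subtracting the two expressions, the third-order term $\nabla_i\nabla_j\nabla_k f-\nabla_j\nabla_i\nabla_k f$ becomes $R_{ijkl}\nabla^l f$ by the Ricci identity \eqref{ricciid}. So one obtains
\begin{equation*}
f(\nabla_i R_{jk}-\nabla_j R_{ik})=R_{ijkl}\nabla^l f+(\nabla_i h\,g_{jk}-\nabla_j h\,g_{ik})-(\nabla_i f\,R_{jk}-\nabla_j f\,R_{ik}).
\end{equation*}

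Next, substituting this into $fC_{ijk}=f(\nabla_i R_{jk}-\nabla_j R_{ik})-\frac{f}{2(n-1)}(\nabla_i R\,g_{jk}-\nabla_j R\,g_{ik})$, and replacing $\nabla h$ using \eqref{eq2}, the $f\nabla R$ contributions coming from $\nabla h$ precisely cancel the last piece of the Cotton tensor. What remains is
\begin{equation*}
fC_{ijk}=R_{ijkl}\nabla^l f+\frac{R}{n-1}(\nabla_i f\,g_{jk}-\nabla_j f\,g_{ik})-(\nabla_i f\,R_{jk}-\nabla_j f\,R_{ik}).
\end{equation*}

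Finally, I would decompose $R_{ijkl}\nabla^l f$ using the Weyl identity \eqref{weyl}, separating the Weyl contribution, the Ricci-by-$\nabla f$ contractions $R_{jl}\nabla^l f g_{ik}-R_{il}\nabla^l f g_{jk}$, the genuine Ricci terms $R_{ik}\nabla_j f-R_{jk}\nabla_i f$, and the scalar curvature terms $g_{ik}\nabla_j f-g_{jk}\nabla_i f$. Collecting coefficients, the scalar curvature terms combine as $\frac{R}{n-1}\bigl(1+\tfrac{1}{n-2}\bigr)=\frac{R}{n-2}$, while the Ricci terms combine as $1+\tfrac{1}{n-2}=\frac{n-1}{n-2}$ with the appropriate sign, matching exactly the formula in the statement. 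The only real obstacle is careful bookkeeping of signs and symmetry indices across roughly half a dozen four-index terms; there is no analytic difficulty, only algebraic vigilance in keeping track of which pair of indices each $\nabla f$ is contracted against.
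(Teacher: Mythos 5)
Your proposal is correct and follows essentially the same route as the paper: differentiate \eqref{eqprincipal}, antisymmetrize in $i,j$ using the Ricci identity \eqref{ricciid}, substitute \eqref{eq2} so the $f\nabla R$ terms cancel against the trace part of the Cotton tensor, and then decompose $R_{ijkl}\nabla^{l}f$ via \eqref{weyl}. The coefficient bookkeeping you indicate ($\tfrac{R}{n-1}(1+\tfrac{1}{n-2})=\tfrac{R}{n-2}$ and $1+\tfrac{1}{n-2}=\tfrac{n-1}{n-2}$) checks out.
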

\begin{proof}
	We star by taking the derivative of \eqref{eqprincipal} and using the Ricci identity \eqref{ricciid}
	\begin{eqnarray*}
		\nabla_{i}fR_{jk}-\nabla_{j}fR_{ik}+f(\nabla_{i}R_{jk}-\nabla_{j}R_{ik})=R_{ijkl}\nabla^{l}f+(\nabla_{i}hg_{jk}-\nabla_{j}hg_{ik}).
	\end{eqnarray*}
	From the above equation and \eqref{eq2} we get
	\begin{eqnarray*}
		\nabla_{i}fR_{jk}-\nabla_{j}fR_{ik}+f(\nabla_{i}R_{jk}-\nabla_{j}R_{ik})&=&R_{ijkl}\nabla^{l}f+\frac{R}{n-1}(\nabla_{i}fg_{jk}-\nabla_{j}fg_{ik})\nonumber\\
		&+&\frac{f}{2(n-1)}(\nabla_{i}Rg_{jk}-\nabla_{j}Rg_{ik}).
	\end{eqnarray*}
	Now the Cotton tensor gives us
	\begin{eqnarray*}
		fC_{ijk}&=&R_{ijkl}\nabla^{l}f+\frac{R}{n-1}(\nabla_{i}fg_{jk}-\nabla_{j}fg_{ik})+\nabla_{j}fR_{ik}-\nabla_{i}fR_{jk}.
	\end{eqnarray*}
	Finally, applying Weyl tensor \eqref{weyl} in the last identity we get the result.
\end{proof}

In what follows, we define
\begin{eqnarray}\label{tensorT}
T_{ijk}&=&\frac{1}{n-2}\big(R_{jl}\nabla^{l}fg_{ik}-R_{il}\nabla^{l}fg_{jk}\big)\nonumber\\
&+&\frac{R}{n-2}(\nabla_{i}fg_{jk}-\nabla_{j}fg_{ik})+\frac{n-1}{n-2}(\nabla_{j}fR_{ik}-\nabla_{i}fR_{jk}).
\end{eqnarray}
This tensor shares the same symmetries of the Cotton tensor. Thus, we can infer that
\begin{eqnarray}\label{eq4}
fC_{ijk}=W_{ijkl}\nabla^{l}f+T_{ijk}.
\end{eqnarray}

Hereafter, we assume $f(x)\neq0$ for any $x\in M$. However, it will not be an issue when proving our results and this will become clear ahead.

\begin{lemma}\label{l1}
	Let $(M^{n},\,g,\,f,\,h)$ be a smooth Riemannian manifold satisfying \eqref{eqprincipal} and \eqref{eq1}. Then, we have
\begin{eqnarray*}
(n-2)B_{ij}=-\nabla^{k}\left(\frac{T_{ikj}}{f}\right)+\frac{n-3}{n-2}\frac{C_{jki}\nabla^{k}f}{f}+\frac{W_{ikjl}\nabla^{k}f\nabla^{l}f}{f^{2}}.
\end{eqnarray*}
\end{lemma}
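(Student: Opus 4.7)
The plan is to compute the divergence $\nabla^{k}C_{ikj}$ directly from the structural identity \eqref{eq4} and then substitute into the Bach tensor formula \eqref{bachcotton}; the $R^{kl}W_{ikjl}$ contributions should cancel, leaving exactly the three terms on the right-hand side.

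First, I would relabel the indices in \eqref{eq4} as $(i,k,j)$ and divide by $f$ (permissible since we assume $f\neq 0$) to obtain
\begin{equation*}
C_{ikj}=\frac{W_{ikjl}\nabla^{l}f}{f}+\frac{T_{ikj}}{f}.
\end{equation*}
Then I would take $\nabla^{k}$ of both sides. The second piece is precisely the term $\nabla^{k}(T_{ikj}/f)$ appearing in the statement. For the first piece, the Leibniz rule gives
\begin{equation*}
\nabla^{k}\!\left(\frac{W_{ikjl}\nabla^{l}f}{f}\right)=\frac{(\nabla^{k}W_{ikjl})\nabla^{l}f}{f}+\frac{W_{ikjl}\nabla^{k}\nabla^{l}f}{f}-\frac{W_{ikjl}\nabla^{k}f\,\nabla^{l}f}{f^{2}}.
\end{equation*}

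Next, I would simplify each of those three terms. Using the Weyl symmetries $W_{ikjl}=W_{jlik}$ together with \eqref{cottWeyl}, the divergence on the appropriate Weyl index yields $\nabla^{k}W_{ikjl}=-\frac{n-3}{n-2}C_{jli}$. The Hessian term is converted via the Einstein-type equation \eqref{eqprincipal}, $\nabla^{k}\nabla^{l}f=fR^{kl}-h\,g^{kl}$; because the Weyl tensor is totally trace-free, the $h\,g^{kl}$ part annihilates $W_{ikjl}$, leaving $W_{ikjl}\nabla^{k}\nabla^{l}f=fR^{kl}W_{ikjl}$. The third term is already in the form desired by the statement.

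Finally, I would assemble everything into an expression for $\nabla^{k}C_{ikj}$ and plug it into \eqref{bachcotton} in the form $(n-2)B_{ij}=-\nabla^{k}C_{ikj}+R^{kl}W_{ikjl}$. The resulting $R^{kl}W_{ikjl}$ coming from the Hessian computation exactly cancels the $R^{kl}W_{ikjl}$ coming from the Bach formula, and renaming the dummy index $l\to k$ in $C_{jli}\nabla^{l}f$ produces $C_{jki}\nabla^{k}f$, which matches the stated identity. The main bookkeeping obstacle — and really the only subtle step — is getting the correct sign and position for $\nabla^{k}W_{ikjl}$ from \eqref{cottWeyl}, since the divergence is taken on the second index rather than the fourth, so the pair-exchange symmetry of $W$ must be invoked carefully before applying the contracted second Bianchi identity. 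Everything else is bookkeeping with the Leibniz rule.
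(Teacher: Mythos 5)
Your proposal is correct and follows essentially the same route as the paper: substitute the decomposition $fC_{ikj}=W_{ikjl}\nabla^{l}f+T_{ikj}$ into $(n-2)B_{ij}=-\nabla^{k}C_{ikj}+R^{kl}W_{ikjl}$, use the Leibniz rule, the trace-freeness of $W$ together with \eqref{eqprincipal} to cancel the $R^{kl}W_{ikjl}$ terms, and \eqref{cottWeyl} (after the pair-exchange symmetry) to convert $\nabla^{k}W_{ikjl}$ into the Cotton term. Your index bookkeeping, including the sign $\nabla^{k}W_{ikjl}=-\frac{n-3}{n-2}C_{jli}$, checks out, so nothing is missing.
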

\begin{proof}
	From \eqref{bachcotton} and \eqref{eq4} we have
	\begin{eqnarray*}
(n-2)B_{ij}&=&-\nabla^{k}C_{ikj}+R^{kl}W_{ikjl}\nonumber\\
&=&-\nabla^{k}\left(\frac{T_{ikj}}{f}+\frac{W_{ikjl}\nabla^{l}f}{f}\right)+R^{kl}W_{ikjl}\nonumber\\
&=&-\nabla^{k}\left(\frac{T_{ikj}}{f}\right)-\frac{\nabla^{k}W_{ikjl}\nabla^{l}f}{f}+\frac{W_{ikjl}\nabla^{k}f\nabla^{l}f}{f^{2}}\nonumber\\
&-&\frac{W_{ikjl}\nabla^{k}\nabla^{l}f}{f}+R^{kl}W_{ikjl}.
\end{eqnarray*}
Now, using \eqref{eqprincipal} and admiting that the Weyl tensor is trace-free we get
\begin{eqnarray*}
(n-2)B_{ij}&=&-\nabla^{k}\left(\frac{T_{ikj}}{f}\right)-\frac{\nabla^{k}W_{ikjl}\nabla^{l}f}{f}+\frac{W_{ikjl}\nabla^{k}f\nabla^{l}f}{f^{2}}.
\end{eqnarray*}
Finally, from \eqref{cottWeyl} the result follows.
\end{proof}

\begin{lemma}\label{lemm2orderderivative}
	Let $(M^{n},\,g,\,f,\,h)$ be a smooth Riemannian manifold satisfying \eqref{eqprincipal} and \eqref{eq1}. Then, we have
\begin{eqnarray*}
C_{jki}R^{ik}=(n-2)\nabla^{i}\nabla^{k}\left(\dfrac{T_{ikj}}{f}\right)-(n-2)\dfrac{1}{f}W_{ikjl}(R^{ik}\nabla^{l}f+R^{il}\nabla^{k}f).
\end{eqnarray*}
\end{lemma}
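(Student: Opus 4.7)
The plan is to take the divergence $\nabla^{i}$ of Lemma \ref{l1} and exploit the Cao--Chen identity \eqref{divBach}. Starting from the equation of Lemma \ref{l1}, I would first isolate $\nabla^{k}(T_{ikj}/f)$, apply $\nabla^{i}$, and multiply by $(n-2)$. Using the symmetry of the Bach tensor and the antisymmetry $C_{jik}=-C_{ijk}$ together with the symmetry of $R^{ik}$, the identity \eqref{divBach} converts the resulting Bach divergence into $-(n-4)C_{jki}R^{ik}$, giving a cleanly rearranged expression for $(n-2)\nabla^{i}\nabla^{k}(T_{ikj}/f)$ modulo two derivative terms involving Cotton and Weyl.

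The next step is to expand those derivative terms via Leibniz. For the Cotton piece, three ingredients collapse it: \eqref{divCotton} kills $\nabla^{i}C_{jki}$; \eqref{eqprincipal} gives $\nabla^{i}\nabla^{k}f=fR^{ik}-hg^{ik}$; and the total trace-freeness of $C$ annihilates the $hg^{ik}$ term, leaving $\nabla^{i}[C_{jki}\nabla^{k}f/f]=C_{jki}R^{ik}-C_{jki}\nabla^{k}f\nabla^{i}f/f^{2}$. For the Weyl piece, I would first deduce $\nabla^{i}W_{ikjl}=\tfrac{n-3}{n-2}C_{jlk}$ by combining \eqref{cottWeyl} with the antisymmetries and pair symmetry of $W$; then the antisymmetry of $W$ in $(i,k)$ against the symmetric Hessian eliminates the term $W_{ikjl}\nabla^{i}\nabla^{k}f$, and trace-freeness of $W$ kills the $h\,g$ contributions, leaving
\begin{equation*}
\nabla^{i}\!\left[\frac{W_{ikjl}\nabla^{k}f\nabla^{l}f}{f^{2}}\right]=\frac{n-3}{n-2}\frac{C_{jlk}\nabla^{k}f\nabla^{l}f}{f^{2}}+\frac{W_{ikjl}R^{il}\nabla^{k}f}{f}.
\end{equation*}

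Assembling the pieces, the key algebraic observation is the equality $C_{jki}\nabla^{k}f\nabla^{i}f=C_{jlk}\nabla^{k}f\nabla^{l}f$, which holds because contracting with the symmetric tensor $\nabla f\otimes\nabla f$ picks out only the symmetric part of the Cotton tensor in its last two indices. Hence the two Cotton-gradient-squared contributions cancel, while the $C_{jki}R^{ik}$ contributions combine with coefficient $-(n-4)+(n-3)=1$. After rearranging I obtain $C_{jki}R^{ik}=(n-2)\nabla^{i}\nabla^{k}(T_{ikj}/f)-(n-2)W_{ikjl}R^{il}\nabla^{k}f/f$, and since $W_{ikjl}R^{ik}=0$ by antisymmetry of $W$ in $(i,k)$ against the symmetric Ricci, I can freely insert the vanishing term $W_{ikjl}R^{ik}\nabla^{l}f$ to recover the stated symmetric form. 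I expect the main obstacle to be the Weyl index gymnastics, especially pinning down the correct sign of $\nabla^{i}W_{ikjl}$ and verifying which Hessian contractions vanish; once those are confirmed, the rest is bookkeeping.
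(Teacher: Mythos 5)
Your proposal is correct and follows essentially the same route as the paper: take $\nabla^{i}$ of Lemma \ref{l1}, convert the Bach divergence via \eqref{divBach}, and collapse the remaining terms using \eqref{divCotton}, \eqref{cottWeyl}, \eqref{eqprincipal} and the trace-freeness/antisymmetries of $C$ and $W$; your coefficient bookkeeping ($-(n-4)+(n-3)=1$) and the cancellation of the Cotton-gradient terms match the paper's computation. The only cosmetic difference is that you isolate the $T$-term first and explicitly observe that $W_{ikjl}R^{ik}\nabla^{l}f=0$ before reinserting it, whereas the paper simply carries that (vanishing) term along.
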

\begin{proof}
	From Lemma \ref{l1} we have
	\begin{eqnarray*}
	(n-2)\nabla^{i}B_{ij}&=&-\nabla^{i}\nabla^{k}\left(\frac{T_{ikj}}{f}\right)+\frac{n-3}{n-2}\frac{\nabla^{i}C_{jki}\nabla^{k}f}{f}+\frac{\nabla^{i}W_{ikjl}\nabla^{k}f\nabla^{l}f}{f^{2}}\nonumber\\
	&+&\frac{n-3}{n-2}C_{jki}\left(\frac{\nabla^{i}\nabla^{k}f}{f}-\frac{\nabla^{i}f\nabla^{k}f}{f^{2}}\right)\nonumber\\
	&+&W_{ikjl}\left[\frac{1}{f^{2}}\left(\nabla^{i}\nabla^{k}f\nabla^{l}f+\nabla^{k}f\nabla^{i}\nabla^{l}f\right)-\frac{2}{f^{3}}\nabla^{i}f\nabla^{k}f\nabla^{l}f\right].
    \end{eqnarray*}
	
Next, since the Cotton tensor is trace-free, from \eqref{eqprincipal} and \eqref{divCotton} we get
	\begin{eqnarray*}
	(n-2)\nabla^{i}B_{ij}&=&-\nabla^{i}\nabla^{k}\left(\frac{T_{ikj}}{f}\right)+\frac{n-3}{n-2}C_{jki}R^{ik}-\frac{(n-3)}{f^{2}(n-2)}C_{jki}\nabla^{k}f\nabla^{i}f\nonumber\\
	&-&\frac{\nabla^{i}W_{jlki}\nabla^{k}f\nabla^{l}f}{f^{2}}+\frac{1}{f}W_{ikjl}\left[\frac{1}{f}\left(\nabla^{i}\nabla^{k}f\nabla^{l}f+\nabla^{k}f\nabla^{i}\nabla^{l}f\right)\right].
	\end{eqnarray*}

   Furthermore, from \eqref{cottWeyl} we have
    \begin{eqnarray*}
	(n-2)\nabla^{i}B_{ij}&=&-\nabla^{i}\nabla^{k}\left(\frac{T_{ikj}}{f}\right)+\frac{n-3}{n-2}C_{jki}R^{ik}+\frac{1}{f}W_{ikjl}(R^{ik}\nabla^{l}f+R^{il}\nabla^{k}f).\nonumber\\
\end{eqnarray*}

Thus, from \eqref{divBach} the result follows.
\end{proof}

\begin{lemma}\label{lemm3orderderivative}
	Let $(M^{n},\,g,\,f,\,h)$ be a smooth Riemannian manifold satisfying \eqref{eqprincipal} and \eqref{eq1}. Then, we have
	\begin{eqnarray*}
		\frac{1}{2}|C|^{2}+R^{ik}\nabla^{j}C_{jki}&=&(n-2)\nabla^{j}\nabla^{i}\nabla^{k}\left(\dfrac{T_{ikj}}{f}\right)\nonumber\\
		&-&(n-2)\nabla^{j}\left[\dfrac{1}{f}W_{ikjl}(R^{ik}\nabla^{l}f+R^{il}\nabla^{k}f)\right].
	\end{eqnarray*}
\end{lemma}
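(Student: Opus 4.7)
The natural plan is to take one more covariant divergence of the identity supplied by Lemma \ref{lemm2orderderivative}. Applying $\nabla^{j}$ to both sides turns the right-hand side into exactly the expression appearing on the right of the statement, so everything reduces to analyzing the left-hand side
\[
\nabla^{j}\bigl(C_{jki}R^{ik}\bigr)=R^{ik}\nabla^{j}C_{jki}+C_{jki}\nabla^{j}R^{ik}.
\]
The first summand already matches the term $R^{ik}\nabla^{j}C_{jki}$ we want. So the entire proof reduces to establishing the identity
\[
C_{jki}\,\nabla^{j}R^{ik}=\tfrac{1}{2}|C|^{2}.
\]

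To prove this identity I would exploit the symmetry $R^{ik}=R^{ki}$ together with the antisymmetry $C_{jki}=-C_{kji}$ coming from \eqref{permut1}. Renaming dummy indices, antisymmetrizing in the first two indices of $C$, and using the fact that $R_{ik}$ is symmetric yields
\[
2\,C^{jki}\nabla_{j}R_{ki}=C^{jki}\bigl(\nabla_{j}R_{ki}-\nabla_{k}R_{ji}\bigr).
\]
The bracket on the right is precisely the Cotton tensor up to scalar-curvature trace terms: by the definition \eqref{cotton},
\[
\nabla_{j}R_{ki}-\nabla_{k}R_{ji}=C_{jki}+\tfrac{1}{2(n-1)}\bigl(g_{ki}\nabla_{j}R-g_{ji}\nabla_{k}R\bigr).
\]
Substituting, the contraction with $C^{jki}$ of the two correction terms vanishes because the Cotton tensor is totally trace-free, leaving only $C^{jki}C_{jki}=|C|^{2}$. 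Dividing by $2$ gives the claimed identity.

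Having established $C_{jki}\nabla^{j}R^{ik}=\tfrac{1}{2}|C|^{2}$, the lemma follows immediately by combining it with the product-rule expansion above and comparing with $\nabla^{j}$ applied to Lemma \ref{lemm2orderderivative}. I expect the only real obstacle to be the bookkeeping in that trace/antisymmetry step; once the Cotton identity is written in the symmetrized form and the trace-free property is invoked, all remaining work is simply matching indices between the divergence of Lemma \ref{lemm2orderderivative} and the statement at hand.
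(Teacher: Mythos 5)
Your proposal is correct and follows essentially the same route as the paper: take the divergence $\nabla^{j}$ of Lemma \ref{lemm2orderderivative}, expand $\nabla^{j}(C_{jki}R^{ik})$ by the product rule, and use the antisymmetry of $C$ in its first two indices together with the definition \eqref{cotton} and the trace-free property of the Cotton tensor to identify $C_{jki}\nabla^{j}R^{ik}=\tfrac{1}{2}|C|^{2}$. In fact you spell out the final trace/antisymmetrization step in more detail than the paper, which simply states ``since the Cotton tensor is trace-free, the result follows.''
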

\begin{proof}
From Lemma \ref{lemm2orderderivative} we have
\begin{eqnarray*}
	C_{jki}\nabla^{j}R^{ik}+\nabla^{j}C_{jki}R^{ik}&=&(n-2)\nabla^{j}\nabla^{i}\nabla^{k}\left(\dfrac{T_{ikj}}{f}\right)\nonumber\\
	&-&\nabla^{j}\left[(n-2)\dfrac{1}{f}W_{ikjl}(R^{ik}\nabla^{l}f+R^{il}\nabla^{k}f)\right].
\end{eqnarray*}
Furthermore, from the symmetries of the Cotton tensor we obtain
\begin{eqnarray*}
	2C_{jki}\nabla^{j}R^{ik}=C_{jki}\nabla^{j}R^{ik}+C_{kji}\nabla^{k}R^{ij}=C_{jki}(\nabla^{j}R^{ik}-\nabla^{k}R^{ij}).
\end{eqnarray*}

Hence,
\begin{eqnarray*}
	\frac{1}{2}C_{jki}(\nabla^{j}R^{ik}-\nabla^{k}R^{ij})+\nabla^{j}C_{jki}R^{ik}&=&(n-2)\nabla^{j}\nabla^{i}\nabla^{k}\left(\dfrac{T_{ikj}}{f}\right)\nonumber\\
	&-&\nabla^{j}\left[(n-2)\dfrac{1}{f}W_{ikjl}(R^{ik}\nabla^{l}f+R^{il}\nabla^{k}f)\right].
\end{eqnarray*}
Since the Cotton tensor is trace-free, the result follows.
	\end{proof}

\section{Proof of the Main Results}

In what follows we will prove an integral theorem (cf. Theorem 4.1 in \cite{catino}, see also Proposition 2.3 in \cite{qing1}).

\begin{theorem}\label{theo1}
		Let $(M^{n},\,g,\,f,\,h)$, $n\geq4$, be a smooth Riemannian manifold satisfying \eqref{eqprincipal} and \eqref{zeroradialweyltensor}. For every $\psi:\mathbb{R}\rightarrow\mathbb{R}$, $C^{2}$ function with $\psi(f)$ having compact support $K\subseteq M$ in which $K\bigcap f^{-1}(0)=\emptyset$, one has
		\begin{eqnarray*}
		\frac{1}{2(n-1)}\int_{M}|C|^{2}\psi(f)=-\frac{n-2}{n-3}\int_{M}\frac{\psi(f)}{f}\nabla^{k}f\nabla^{i}\nabla^{j}\nabla^{l}W_{jkil}.
		\end{eqnarray*}
\end{theorem}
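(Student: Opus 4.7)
The plan is to reduce the stated integral identity to the pointwise algebraic identity
\[
f|C|^{2} = -2(n-1)\,\nabla^{j}f\,\nabla^{i}\nabla^{k}C_{ikj},
\]
and then convert the Cotton tensor on the right into the Weyl tensor via \eqref{cottWeyl}.

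First, I would exploit the zero radial Weyl hypothesis: the condition $W(\cdot,\cdot,\cdot,\nabla f)=0$, combined with the pair-exchange symmetry and the antisymmetries of $W$, forces $W$ to vanish whenever $\nabla f$ is contracted into any one of its four slots. Applied to Lemma \ref{lemm2orderderivative}, the two Weyl terms on the right drop out, while \eqref{eq4} simultaneously gives $T_{ikj}/f=C_{ikj}$. The lemma therefore reduces to the clean identity $C_{jki}R^{ik} = (n-2)\,\nabla^{i}\nabla^{k}C_{ikj}$.

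Second, contracting \eqref{eq4} with $C^{ijk}$ yields $f|C|^{2}=T_{ijk}C^{ijk}$, and expanding via \eqref{tensorT} the total trace-freeness of the Cotton tensor kills four of the six resulting terms. The two that survive combine through the antisymmetry $C_{ijk}=-C_{jik}$ to give $f|C|^{2} = \frac{2(n-1)}{n-2}\nabla^{j}f\,R^{ik}C_{ijk}$. The cyclic identity $C_{ijk}+C_{jki}+C_{kij}=0$, together with the observation $R^{ik}C_{ikj}=0$ (symmetric $R^{ik}$ paired against the antisymmetric first two slots of $C_{ikj}$), yields $R^{ik}C_{ijk}\nabla^{j}f = -R^{ik}C_{jki}\nabla^{j}f$. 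Substituting the simplified Lemma \ref{lemm2orderderivative} then produces the displayed algebraic identity.

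Third, dividing by $f$, multiplying by $\psi(f)$, and integrating over $M$ (using the compact support of $\psi(f)$, though no integration by parts is yet required), I would apply \eqref{cottWeyl} to rewrite $C_{ikj}=-\frac{n-2}{n-3}\nabla^{l}W_{ikjl}$. A final manipulation using the symmetries of the Weyl tensor (pair exchange $W_{ikjl}=W_{jlik}$ followed by the antisymmetry of the first pair of slots) together with dummy relabelling recasts the integrand into the form $-\nabla^{k}f\,\nabla^{i}\nabla^{j}\nabla^{l}W_{jkil}$ appearing in the statement. The hard part will be this last step: reordering the three covariant derivatives generates curvature commutator terms, which must be shown to vanish after contraction with $\nabla^{i}$ and $\nabla^{k}f$ and integration against $\psi(f)/f$. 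One expects these commutators to be handled either by invoking the first Bianchi identity for $W$ in concert with $W(\cdot,\cdot,\cdot,\nabla f)=0$, or by an additional integration by parts whose boundary contribution is killed by the compact support of $\psi(f)$.
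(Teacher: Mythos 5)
Your proposal is correct, and it takes a genuinely different route from the paper's. The paper multiplies Lemma \ref{lemm3orderderivative} by $\psi(f)$, integrates, and then performs a chain of integrations by parts (compact support killing the boundary terms), invoking \eqref{zeroradialweyltensor} only midway to discard the Weyl terms; the algebraic identity $R^{ji}\nabla^{k}fC_{kji}=-\tfrac{(n-2)f}{2(n-1)}|C|^{2}$ that you obtain by contracting \eqref{tensorT} against $C$ appears there too, but only at the very end. You instead note that under \eqref{zeroradialweyltensor} the relation \eqref{eq4} gives $T_{ikj}/f=C_{ikj}$ pointwise (recall $f\neq0$), so Lemma \ref{lemm2orderderivative} collapses to $C_{jki}R^{ik}=(n-2)\nabla^{i}\nabla^{k}C_{ikj}$, and combining this with your contraction identity yields the \emph{pointwise} statement $\tfrac{1}{2(n-1)}|C|^{2}=-\tfrac{1}{f}\nabla^{j}f\,\nabla^{i}\nabla^{k}C_{ikj}$. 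This is strictly stronger than Theorem \ref{theo1}: multiplying by $\psi(f)$ and integrating requires no integration by parts whatsoever, and the compact support of $\psi(f)$ is needed only to make the integrals finite. In exchange, the paper's route never needs Lemma \ref{lemm2orderderivative} in its ``collapsed'' pointwise form, but your route is shorter and exposes the pointwise nature of the identity.

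The step you flag as hard is in fact immediate, and none of the curvature commutators you anticipate ever arise, because no reordering of covariant derivatives is needed. Do the index bookkeeping \emph{before} converting to Weyl: by the antisymmetry in \eqref{permut1}, by \eqref{permut2} applied to the inner divergence, and by relabelling the dummies $j\leftrightarrow k$,
\[
-\nabla^{j}f\,\nabla^{i}\nabla^{k}C_{ikj}
=\nabla^{j}f\,\nabla^{i}\nabla^{k}C_{kij}
=\nabla^{j}f\,\nabla^{i}\nabla^{k}C_{kji}
=\nabla^{k}f\,\nabla^{i}\nabla^{j}C_{jki},
\]
and now \eqref{cottWeyl} turns the right-hand side into $-\tfrac{n-2}{n-3}\nabla^{k}f\,\nabla^{i}\nabla^{j}\nabla^{l}W_{jkil}$, exactly as stated. (If you prefer to work at the Weyl level after already substituting \eqref{cottWeyl}, the discrepancy between your integrand and the stated one is, via the first Bianchi identity and \eqref{cottWeyl}, proportional to $\nabla^{i}\bigl(\nabla^{k}C_{jik}\bigr)\nabla^{j}f$, which vanishes by \eqref{divCotton}; the correct second ingredient is \eqref{divCotton}, not another use of $W(\cdot,\cdot,\cdot,\nabla f)=0$ or an extra integration by parts.) With that one line supplied, your argument is complete.
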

\begin{proof}
	From Lemma \ref{lemm3orderderivative} we have
		\begin{eqnarray*}
		\frac{1}{2}|C|^{2}\psi(f)+\psi(f)R^{ik}\nabla^{j}C_{jki}&=&(n-2)\psi(f)\nabla^{j}\nabla^{i}\nabla^{k}\left(\dfrac{T_{ikj}}{f}\right)\nonumber\\
		&-&(n-2)\psi(f)\nabla^{j}\left[\dfrac{1}{f}W_{ikjl}(R^{ik}\nabla^{l}f+R^{il}\nabla^{k}f)\right].
	\end{eqnarray*}

Integrating the above equation leads us to
		\begin{eqnarray*}
	\frac{1}{2}\int_{M}|C|^{2}\psi(f)+\int_{M}\psi(f)R^{ik}\nabla^{j}C_{jki}&=&-(n-2)\int_{M}\psi'(f)\nabla^{j}f\nabla^{i}\nabla^{k}\left(\dfrac{T_{ikj}}{f}\right)\nonumber\\
	&+&(n-2)\int_{M}\psi'(f)\nabla^{j}f\left[\dfrac{1}{f}W_{ikjl}(R^{ik}\nabla^{l}f+R^{il}\nabla^{k}f)\right].
\end{eqnarray*}
Since $W(\cdot,\,\cdot,\,\cdot,\,\nabla f)=0$, from Lemma \ref{lemm2orderderivative} we obtain
		\begin{eqnarray*}
	\frac{1}{2}\int_{M}|C|^{2}\psi(f)+\int_{M}\psi(f)R^{ik}\nabla^{j}C_{jki}&=&-\int_{M}\psi'(f)\nabla^{j}fC_{jki}R^{ik}.
\end{eqnarray*}
Moreover, the Cotton tensor is trace-free. Hence, from \eqref{eqprincipal} we get
		\begin{eqnarray*}
	\frac{1}{2}\int_{M}|C|^{2}\psi(f)+\int_{M}\frac{\psi(f)}{f}\nabla^{i}\nabla^{k}f\nabla^{j}C_{jki}=-\int_{M}\frac{\psi'(f)}{f}\nabla^{j}fC_{jki}\nabla^{k}\nabla^{i}f.
\end{eqnarray*}

So, integrating the right-hand side of the identity above gives us
		\begin{eqnarray*}
	\frac{1}{2}\int_{M}|C|^{2}\psi(f)&+&\int_{M}\frac{\psi(f)}{f}\nabla^{i}\nabla^{k}f\nabla^{j}C_{jki}=\int_{M}\left(\frac{\psi''(f)}{f}-\frac{\psi'(f)}{f^{2}}\right)C_{jki}\nabla^{i}f\nabla^{k}f\nabla^{j}f\nonumber\\
	&+&\int_{M}\frac{\psi'(f)}{f}\nabla^{j}f\nabla^{i}f\nabla^{k}C_{jki} + \int_{M}\frac{\psi'(f)}{f}\nabla^{k}\nabla^{j}f\nabla^{i}fC_{jki}\nonumber\\
	&=&\int_{M}\frac{\psi'(f)}{f}\nabla^{j}f\nabla^{i}f\nabla^{k}C_{jki} + \int_{M}\frac{\psi'(f)}{f}\nabla^{k}\nabla^{j}f\nabla^{i}fC_{jki}.
\end{eqnarray*}
Knowing that the Hessian is symmetric, we have
\begin{eqnarray}\label{dana1} 2\nabla^{k}\nabla^{j}fC_{jki}=\nabla^{k}\nabla^{j}fC_{jki}+\nabla^{j}\nabla^{k}fC_{kji}=\nabla^{k}\nabla^{j}f(C_{jki}+C_{kji})=0.
\end{eqnarray} 
Now remember that the Cotton tensor is also skew-symmetric. Then, renaming indices we get
		\begin{eqnarray*}
	\frac{1}{2}\int_{M}|C|^{2}\psi(f)&+&\int_{M}\frac{\psi(f)}{f}\nabla^{i}\nabla^{k}f\nabla^{j}C_{jki}=\int_{M}\frac{\psi'(f)}{f}\nabla^{j}f\nabla^{i}f\nabla^{k}C_{jki}\nonumber\\
	&=&-\int_{M}\frac{\psi'(f)}{f}\nabla^{i}f\nabla^{k}f\nabla^{j}C_{jki}=-\int_{M}\frac{\nabla^{i}\psi(f)}{f}\nabla^{k}f\nabla^{j}C_{jki}\nonumber\\
	&=&\int_{M}\frac{\psi(f)}{f}\nabla^{i}\nabla^{k}f\nabla^{j}C_{jki}-\int_{M}\frac{\psi(f)}{f^{2}}\nabla^{i}f\nabla^{k}f\nabla^{j}C_{jki}\nonumber\\
	&+&\int_{M}\frac{\psi(f)}{f}\nabla^{k}f\nabla^{i}\nabla^{j}C_{jki}.
\end{eqnarray*}
Thus,

		\begin{eqnarray}\label{oro1}
	\frac{1}{2}\int_{M}|C|^{2}\psi(f)
	+\int_{M}\frac{\psi(f)}{f^{2}}\nabla^{i}f\nabla^{k}f\nabla^{j}C_{jki}=\int_{M}\frac{\psi(f)}{f}\nabla^{k}f\nabla^{i}\nabla^{j}C_{jki}.
\end{eqnarray}

Yet, an undesirable term remains. Now we can rewrite it by an integration as it follows.
\begin{eqnarray*}
	\int_{M}\frac{\psi(f)}{f^{2}}\nabla^{i}f\nabla^{k}f\nabla^{j}C_{jki}&=&-\int_{M}\left(\frac{\psi'(f)}{f^{2}}-\frac{2\psi(f)}{f^{3}}\right)\nabla^{j}f\nabla^{i}f\nabla^{k}fC_{jki}\nonumber\\
	&-&\int_{M}\frac{\psi(f)}{f^{2}}\nabla^{j}\nabla^{k}f\nabla^{i}fC_{jki}-\int_{M}\frac{\psi(f)}{f^{2}}\nabla^{i}\nabla^{j}f\nabla^{k}fC_{jki}.
\end{eqnarray*}
Thus, from \eqref{dana1} and the skew-symmetries of Cotton, we get
\begin{eqnarray*}
	\int_{M}\frac{\psi(f)}{f^{2}}\nabla^{i}f\nabla^{k}f\nabla^{j}C_{jki}=-\int_{M}\frac{\psi(f)}{f^{2}}\nabla^{i}\nabla^{j}f\nabla^{k}fC_{jki}.
\end{eqnarray*}
Since the Cotton tensor is totally trace-free, from \eqref{eqprincipal} the above equation becomes
\begin{eqnarray}\label{oro2}
	\int_{M}\frac{\psi(f)}{f^{2}}\nabla^{i}f\nabla^{k}f\nabla^{j}C_{jki}=\int_{M}\frac{\psi(f)}{f}R^{ji}\nabla^{k}fC_{kji}.
\end{eqnarray}

Now, acknowledging that $W(\cdot,\,\cdot,\,\cdot,\,\nabla f)=0$ and that $C_{ijk}$ is totally trace-free, a simple computation from \eqref{tensorT} and \eqref{eq4} yields 
\begin{eqnarray*}
	R^{ji}\nabla^{k}fC_{kji}=\frac{1}{2}C_{kji}(R^{ji}\nabla^{k}f-R^{ki}\nabla^{j}f)=-\frac{n-2}{2(n-1)}C_{kji}T^{kji}=-\frac{(n-2)f}{2(n-1)}|C|^{2}.
	\end{eqnarray*}
Hence, from \eqref{oro1}, \eqref{oro2} and the above identity we get
		\begin{eqnarray*}
\frac{1}{2(n-1)}\int_{M}|C|^{2}\psi(f)=\int_{M}\frac{\psi(f)}{f}\nabla^{k}f\nabla^{i}\nabla^{j}C_{jki}.
\end{eqnarray*}

Finally, from \eqref{cottWeyl} we get the result.
	\end{proof}

\

\noindent {\bf Proof of Theorem \ref{theocpt}:}
  Assuming $\psi(f)=f^{4}$ the integrable condition of Theorem \ref{theo1}  can be avoided. Thus, terms (in Theorem \ref{theo1}) such as   $$\int_{M}\left(\frac{\psi'(f)}{f^{2}}-\frac{2\psi(f)}{f^{3}}\right)\nabla^{j}f\nabla^{i}f\nabla^{k}fC_{jki},$$
  will be integrable.
  
  Then, since $M$ is compact, 
from Theorem \ref{theo1} an integration gives us
			\begin{eqnarray*}
	\frac{1}{2(n-1)}\int_{M}f^{4}|C|^{2}&=&-\frac{n-2}{n-3}\int_{M}f^{3}\nabla^{k}f\nabla^{i}\nabla^{j}\nabla^{l}W_{jkil}\nonumber\\
	&=&-\frac{n-2}{4(n-3)}\int_{M}\nabla^{k}f^{4}\nabla^{i}\nabla^{j}\nabla^{l}W_{jkil}\nonumber\\
	&=&\frac{n-2}{4(n-3)}\int_{M}f^{4}\nabla^{k}\nabla^{i}\nabla^{j}\nabla^{l}W_{jkil}.
\end{eqnarray*}
Now assuming $div^{4}W=0$, we have $C=0$, and from \eqref{cottWeyl} we have a harmonic Weyl tensor.

\hfill $\Box$

\

\iffalse
\begin{remark}
It is important to point out that depending on the choice of $\psi(f)$ in Theorem \ref{theo1}, the integrable condition $K\bigcap f^{-1}(0)=\emptyset$ may be nonexistent. That is, depending on the choice of $\psi(f)$, there will be no division by $f$ in the proof of Theorem \ref{theo1}. In what follows, we will consider $\psi(f)$ in the proofs of Theorem \ref{theocpt} and Theorem \ref{theocomplete} to avoid this integrable condition.
\end{remark}
\fi

\

\noindent {\bf Proof of Theorem \ref{theocomplete}:}
	Taking $\phi\in C^{3}$, a real nonnegative function with $\phi=1$ in $[0,\,s]$, $\phi'\leq0$ in $[s,\,2s]$ and $\phi=0$ in $[2s,\,+\infty]$ for a fixed $s>0$; we have that $f$ is proper. Thus, we get that $\psi(f)=f^{4}\phi(f)$ has compact support $K\subseteq M$, for $s>0$. Then, with this choice of $\psi(f)$ the integrable condition $K\bigcap f^{-1}(0)=\emptyset$ in Theorem \ref{theo1} can be avoided. Then,  by Theorem \ref{theo1} we have
	\begin{eqnarray*}
		\frac{1}{2(n-1)}\int_{M}f^{4}\phi(f)|C|^{2}&=&-\frac{n-2}{n-3}\int_{M}\phi(f)f^{3}\nabla^{k}f\nabla^{i}\nabla^{j}\nabla^{l}W_{jkil}\nonumber\\
		&=&-\frac{n-2}{4(n-3)}\int_{M}\phi(f)\nabla^{k}f^{4}\nabla^{i}\nabla^{j}\nabla^{l}W_{jkil}
	\end{eqnarray*}

Now assuming $div^{4}W=0$, integrating by parts and applying once more Theorem \ref{theo1} we obtain
	\begin{eqnarray*}
	\frac{1}{(n-1)}\int_{M}f^{4}\phi(f)|C|^{2}&=&\frac{n-2}{2(n-3)}\int_{M}\phi'(f)f^{4}\nabla^{k}f\nabla^{i}\nabla^{j}\nabla^{l}W_{jkil}\nonumber\\
	&=&-\frac{1}{4(n-1)}\int_{M}f^{5}\phi'(f)|C|^{2}.
\end{eqnarray*}
Therefore, 
	\begin{eqnarray}\label{uc1}
	\int_{M}f^{4}|C|^{2}[\phi(f)+\frac{1}{4}f\phi'(f)]=0.
\end{eqnarray}
Since, by definition, $\phi(f)=1$ in $M_{s}=\{x\in M; f(x)\leq s\}$, we have $\phi(f)+\frac{1}{4}f\phi'(f)=1$ on the compact set $M_{s}$. Hence, from \eqref{uc1} we get
	\begin{eqnarray*}
0\leq\int_{M_{s}}f^{4}|C|^{2}=0.
\end{eqnarray*}

So, $C=0$ in $M_{s}$. Thus, taking the limit ($s\rightarrow+\infty$), we have $C=0$ on $M$. So, the result follows from \eqref{cottWeyl}.
\hfill $\Box$

\

\noindent {\bf Proof of Theorem \ref{teoclassifica}:}
Consider an orthonormal frame $\{e_{1}, e_{2}, e_{3},\ldots,e_{n}\}$ diagonalizing $Ric$ at a regular point $p\in\Sigma=f^{-1}(c)$, with associated eigenvalues $R_{kk}$, $k=1,\ldots, n,$ respectively. That is, $R_{ij}(p)=R_{ii}\delta_{ij}(p)$. Since we have harmonic Weyl tensor and zero radial Weyl curvature, from Lemma \ref{lemma0} we have $T(p)=0$, i.e.,
	\begin{eqnarray}\label{22}
\nabla_{j}f[R_{jj}+(n-1)R_{ii}-R]=0,\quad\forall i\neq j.
	\end{eqnarray}
	Without lost of generalization, consider $\nabla_{i}f\neq0$ and $\nabla_{j}f=0$ for all $i\neq j$. Then we have $Ric(\nabla f)=R_{ii}\nabla f$, i.e., $\nabla f$ is an eigenvector for $Ric$. From (\ref{22}), $R_{ii}$ has multiplicity $1$ and $R_{jj}$ has multiplicity $n-1$, for all $j\neq i$. Moreover, if $\nabla_{i}f\neq0$ for at least two distinct directions, from (\ref{22}) we have that $\lambda=R_{11}=\ldots=R_{nn}$ and we also have $\nabla f$ as an eigenvector for $Ric$.

	Therefore, in any case we have that $\nabla f$ is an eigenvector for $Ric$. From the above discussion we can take $\{e_{1}=\frac{\nabla f}{|\nabla f|},e_{2},\ldots,e_{n}\}$ as an orthonormal frame for $\Sigma$ diagonalizing $Ric$. 
	
		Now from \eqref{eqprincipal} it is important to notice that
	\begin{eqnarray}\label{eqteo7}
	fR_{a\,1}=\frac{1}{2}\nabla_{a}|\nabla f|^{2}+h\nabla_{a}f;\quad a\in\{2,\ldots,n\}.
	\end{eqnarray}
	Hence, equation \eqref{eqteo7} gives us $|\nabla f|$ constant in $\Sigma$. Thus, we can express the metric $g$ in the form
	 \begin{eqnarray*}
	 	g_{ij} = \frac{1}{|\nabla f|^{2}}df^{2} + g_{ab}(f,\theta)d\theta_{a}d\theta_{b},
	 \end{eqnarray*}
	 where $g_{ab}(f, \theta)d\theta_{a}d\theta_{b}$ is the induced metric and $(\theta_{2},\,\ldots,\,\theta_{n})$ is any local coordinate system on $\Sigma$. We can find a good overview of the level set structure in \cite{cao3,cao2,cao,leandro}.

	Observe that there is no open subset $\Omega$ of $M^{n}$ where $\{\nabla f=0\}$ is dense. In fact, if $f$ is constant in $\Omega$, since $M^{n}$ is complete, we have $f$ analytic, which implies $f$ is constant everywhere. That being said, consider $\Sigma$ a connected component of the level surface $f^{-1}(c)$ (possibly disconnected) where $c$ is any regular value of the function $f$. Suppose that $I$ is an open interval containing $c$ such that $f$ has no critical points in the open neighborhood $U_{I}=f^{-1}(I)$ of $\Sigma$. For sake of simplicity, let $U_{I}$ be a connected component of $f^{-1}(I)$. Then, we can make a change of variables 
	 \begin{eqnarray*}
	 	r(x)=\int\frac{df}{|\nabla f|}
	 \end{eqnarray*}
	 such that the metric $g$ in $U_{I}$ can be expressed by
	 \begin{eqnarray*}
	 	g_{ij}=dr^{2}+g_{ab}(r,\theta)d\theta_{a}d\theta_{b}.
	 \end{eqnarray*}
	 
	 Let $\nabla r=\frac{\partial}{\partial r}$, then $|\nabla r|=1$ and $\nabla f=f'(r)\frac{\partial}{\partial r}$ on $U_{I}$. Note that $f^{\prime}(r)$ does not change
	 sign on $U_{I}$. Moreover, we have $\nabla_{\partial r}\partial r=0.$
	 
	 From \eqref{eqprincipal} and the fact that $\nabla f$ is an eigenvector of $Ric$, the second fundamental formula on $\Sigma$ is given by
\begin{eqnarray*}\label{eq555}
h_{ab}= - \langle e_{1},\,\nabla_{a}e_{b}\rangle=\frac{\nabla_{a}\nabla_{b}f}{\|\nabla f\|}=\frac{f R_{ab}-hg_{ab}}{\|\nabla f\|}=\frac{H}{n-1}g_{ab},
\end{eqnarray*}
where $H=H(r)$, since $H$ is constant in $\Sigma$. In fact, contracting the Codazzi equation 
\begin{eqnarray*}
R_{1cab}=\nabla_{a}h_{bc}-\nabla_{b}h_{ac}
\end{eqnarray*}
over $c$ and $b$, it gives
\begin{eqnarray*}
R_{1a}=\nabla_{a}(H)-\frac{1}{n-1}\nabla_{a}(H)=\frac{n-2}{n-1}\nabla_{a}(H).
\end{eqnarray*}
On the other hand, since $R_{1a}=0$ we know that $H$ is constant in $\Sigma$.

For what follows, we fix a local coordinates system
$$(x_{1},\, \ldots,\, x_{n}) = (r,\,\ldots,\, \theta_{n}) $$
in $U_{I}$, where $(\theta_{2},\ldots,\theta_{n})$ is any local coordinates system on the level surface $\Sigma_{c}$. Considering that $a, b, c,\cdots\in\{2, \ldots, n\}$, we have
\begin{eqnarray*}
h_{ab}=-g(\partial_{r},\, \nabla_{a}\partial_{b})=-g(\partial_{r}, \Gamma^{l}_{ab}\partial_{l})=\Gamma^{1}_{ab}.
\end{eqnarray*}
Now, by definition
\begin{eqnarray*}
\Gamma^{1}_{ab}=\frac{1}{2}g^{11}\left(-\frac{\partial}{\partial r}g_{ab}\right)=-\frac{1}{2}\frac{\partial}{\partial r}g_{ab}.
\end{eqnarray*}
Then,
\begin{eqnarray*}
\frac{2}{n-1}H(r)g_{ab}=\frac{\partial}{\partial r}g_{ab}.
\end{eqnarray*}
Hence, we can infer that
\begin{eqnarray*}
g_{ab}(r,\theta)=\varphi(r)^{2}g_{ab}(r_{0},\theta),
\end{eqnarray*}
where $\varphi(r)=e^{\frac{1}{n-1}\left(\int^{r}_{r_{0}}H(s)ds\right)}$ and the level set $\{r=r_{0}\}$  corresponds to the connected component $\Sigma$ of $f^{-1}(c)$ (see more details in \cite{leandro}).

 Now, we can apply the warped product structure (cf. \cite{cao2} see also the proof of Theorem 1 in \cite{leandro}). Hence, considering  $$(M^{n},\,g)=(I,\,dr^{2})\times_{\varphi}(N^{n-1},\,\bar{g})$$
 we have
 \begin{eqnarray*}
 W_{1\,a\,1\,b}=\frac{1}{n-2}\bar{R}_{a\,b}-\frac{\bar{R}}{(n-2)(n-1)}g_{a\,b}.
 \end{eqnarray*}

 Finally, since $W(\cdot,\,\cdot,\,\cdot,\,\nabla f)=0$ we obtain that $N$ is an Einstein manifold. 
\hfill $\Box$

\
\iffalse
\noindent {\bf Proof of Corollary \ref{balta}:}
The proof follows the same strategy used in Theorem \ref{theocomplete}. We construct the nonnegative function $\phi$ having compact support $K\subseteq M$ such that $K\bigcap f^{-1}(-1)=\emptyset$ (as in Theorem \ref{theocomplete}). This will be possible because the set $f^{-1}(-1)$ has zero n-dimensional measure (see Proposition 2.1 in \cite{hwang1}). Therefore, there exist $K\subseteq M$
such that $K\bigcap f^{-1}(-1)=\emptyset$.

Then, identically as in Theorem \ref{theocomplete}, we can infer that the Cotton tensor is zero. Thus, from \cite{hwang1} we get the result.
\hfill $\Box$
\fi
\

\begin{acknowledgement}
The author would like to thank Joana T\'abata for her careful reading and relevant remarks. Moreover, we would like to thanks the referee for his valuable suggestions.
\end{acknowledgement}

\

\end{document}